\date{\today}
\newcommand{\R}{{\mathbb R}}       
\newcommand{\N}{{\mathbb N}}
\newcommand{\T}{{\mathbb T}}
\newcommand{\Sp}{{\mathbb S}^1}
\newcommand{\Z}{{\mathbb Z}}       
\DeclarePairedDelimiter\abs{\lvert}{\rvert}
\DeclarePairedDelimiter\norm{\lVert}{\rVert}
\newcommand{\vertiii}[1]{{\left\vert\kern-0.25ex\left\vert\kern-0.25ex\left\vert #1 
		\right\vert\kern-0.25ex\right\vert\kern-0.25ex\right\vert}}
\DeclarePairedDelimiterX{\set}[1]{\{}{\}}{\setargs{#1}}
\NewDocumentCommand{\setargs}{>{\SplitArgument{1}{;}}m}
{\setargsaux#1}
\NewDocumentCommand{\setargsaux}{mm}
{\IfNoValueTF{#2}{#1} {#1\,\delimsize|\,\mathopen{}#2}}
\newtheorem{theorem}{Theorem}[section]
\newtheorem{lemma}[theorem]{Lemma}
\newtheorem*{theorem*}{Theorem}
\theoremstyle{definition}
\newtheorem{definition}[theorem]{Definition}
\numberwithin{equation}{section}
\begin{document}
	
	\title[On Sharpness of $L\log L$ Criterion]{On Sharpness of $L\log L$ Criterion for Weak Type $(1,1)$ boundedness of rough operators}

	\author{Ankit Bhojak}
	\address{Ankit Bhojak\\
		Department of Mathematics\\
		Indian Institute of Science Education and Research Bhopal\\
		Bhopal-462066, India.}
	\email{ankitb@iiserb.ac.in}
	\thanks{}

	\begin{abstract}
	In this note, we show that the $L\log L$ hypothesis is the strongest size condition on a homogeneous rough function on the sphere which ensures the weak type $(1,1)$ boundedness of the corresponding singular integral $T_\Omega$, provided $T_\Omega$ is bounded  in $L^2$.
	\end{abstract}

\subjclass[2010]{Primary 42B20}	
	\maketitle
	
	\section{Introduction}
	Let   $\Omega\in L^1(\mathbb S^{d-1})$ with $\int_{\mathbb S^{d-1}}\Omega(\theta) d\theta=0$, where $d\theta$ is the surface measure on $\mathbb S^{d-1}$. Calder\'on and Zygmund \cite{CZ} considered the rough singular integrals defined as,
	\[ T_{\Omega}f(x)=p.v.\int\frac{1}{|x-y|^d}\Omega\Bigl(\frac{x-y}{|x-y|}\Big)f(y)\;dy,\]
	
	They showed that  $\Omega\in L \log L(\mathbb S^{d-1})$  i.e. $\int_{\Sp}|\Omega(\theta)|\log(e+|\Omega(\theta)|)<\infty$ implies that $T_\Omega$ is bounded on $L^p(\R^d)$ for $1<p<\infty$. The singular integral $T_\Omega$ was shown to be of weak type $(1,1)$ using $TT^*$ arguments by Christ and Rubio de Francia \cite{CR} in dimension $d=2$ (and independently by Hofmann \cite{Hof}). The case of general dimensions was resolved by Seeger \cite{S1} by showing that $T_\Omega$ is of weak type $(1,1)$ for $\Omega\in L\log L(\mathbb S^{d-1})$ assuming the $L^2$ boundedness of $T_\Omega$.
	
	\par
	
	It is of interest to know other sufficient conditions on $\Omega$ that ensures the weak type boundedness of the operator $T_\Omega$. In fact, during the inception of this problem, Calder\'on and Zygmund \cite{CZ} showed that $\Omega\in L\log L$ is ``almost" a necessary size condition for $T_\Omega$ to be $L^2$ bounded. If we drop the condition that $\Omega\in L\log L$, then Calder\'on and Zygmund \cite{CZ} pointed out that $T_\Omega$ may even fail to be $L^2$ bounded. Infact, the examples of $\Omega$ constructed in \cite{WZ} lies outside the space $L\log L$ and the corresponding operator $T_\Omega$ is unbounded on $L^2(\R^d)$. Later on, it was shown in \cite{Co,RW} that $\Omega\in H^1(\Sp)$ in the sense of Coifman and Weiss \cite{CW} implies $T_\Omega:L^p(\R^d)\to L^p(\R^d),\;1<p<\infty$. It is still an open problem if $T_\Omega$ is of weak type $(1,1)$ for $\Omega\in H^1(\Sp)$. A partial result assuming additional conditions on $H^1$-atoms in dimension two was obtained by Stefanov \cite{Sv}.
	
	In \cite{GHR, H2}, it was shown that $T_\Omega$ distinguishes $L^p$ spaces by considering a suitable quantity based on the Fourier transform of $\Omega$. However, we would like to know if there exists an Orlicz space $X\supsetneq L\log L$ which would ensure that the $L^2$ boundedness of $T_\Omega$ implies the weak $(1,1)$ boundeness of $T_\Omega$ when $\Omega\in X$. We will show that no such $X$ exists. To state our main result, we introduce the Orlicz spaces and discuss some of its basic properties.
	\par
	\begin{definition}[\cite{BS}]
		Let $\Phi:[0,\infty)\to [0,\infty)$ be a Young's function i.e. there exists an increasing and left continuous function $\phi:[0,\infty)\to [0,\infty)$ with $\phi(0)=0$ such that $\Phi(t)=\int_0^t\phi(u)\;du$ and $\frac{\Phi(t)}{t}\to\infty,\text{ as }t\to\infty$. We say $\Omega\in \Phi(L)(\Sp)$, if the quantity
		\begin{equation}\label{O1}
			\|\Omega\|_{\Phi(L)}=\int_{\mathbb{S}^1}\Phi(|\Omega(\theta)|)\;d\theta
		\end{equation}
		is finite.	
	\end{definition}
	The quantity in \eqref{O1} fails to be a norm and $\Phi(L)(\Sp)$ is not even a linear space. To remedy that, we define the set
	\[L^\Phi(\Sp)=\{\Omega:\Sp\to\R: \exists k>0 \text{ such that }\norm{k^{-1}\Omega}_{\Phi(L)}<\infty\}.\]
	We define the Luxemburg norm as
	\[\vertiii{\Omega}_{\Phi(L)}=\inf\{k>0: \|k^{-1}\Omega\|_{\Phi(L)}<\infty\}.\]
	It is well known that the Orlicz space $L^\Phi(\Sp)$ forms a Banach space with this norm. We state the following fact that compares this norm with the quantity in \eqref{O1}, see Lemma $8.8$ in \cite{BS}.
	\begin{lemma}\label{Orlicz}
		If $\|\Omega\|_{\Phi(L)}<\vertiii{\Omega}_{\Phi(L)}$, then $\vertiii{\Omega}_{\Phi(L)}\leq 1$.
	\end{lemma}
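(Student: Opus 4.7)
\medskip
\noindent\textbf{Proof plan.} The plan is to argue by contrapositive: I will assume $\vertiii{\Omega}_{\Phi(L)} > 1$ and deduce $\|\Omega\|_{\Phi(L)} \geq \vertiii{\Omega}_{\Phi(L)}$, which directly contradicts the hypothesis of the lemma. The definition of the Luxemburg norm understood here is the standard one, namely $\vertiii{\Omega}_{\Phi(L)} = \inf\{k > 0 : \|k^{-1}\Omega\|_{\Phi(L)} \leq 1\}$, so that for every $k$ strictly below this infimum one has $\|k^{-1}\Omega\|_{\Phi(L)} > 1$.

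The first observation I will use is that $\Phi$ is convex with $\Phi(0)=0$; this is immediate from the assumption that $\phi$ is increasing and left-continuous with $\phi(0)=0$, since then $\Phi$ is the integral of an increasing function. Consequently, for $\lambda \in [0,1]$ and $t\geq 0$,
\[
\Phi(\lambda t) = \Phi(\lambda t + (1-\lambda)\cdot 0) \leq \lambda\Phi(t)+(1-\lambda)\Phi(0) = \lambda\Phi(t).
\]
Taking $\lambda = 1/k$ for $k\geq 1$ yields the scaling inequality $\Phi(t/k) \leq \Phi(t)/k$, which is the only analytical input beyond the definition.

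Now assume $\vertiii{\Omega}_{\Phi(L)} > 1$ and pick any $k$ with $1 \leq k < \vertiii{\Omega}_{\Phi(L)}$. By the defining property of the infimum, $\|k^{-1}\Omega\|_{\Phi(L)} > 1$. Combined with the scaling inequality pointwise on $\Sp$,
\[
\|\Omega\|_{\Phi(L)} = \int_{\Sp}\Phi(|\Omega(\theta)|)\,d\theta \;\geq\; k\int_{\Sp}\Phi\bigl(|\Omega(\theta)|/k\bigr)\,d\theta \;=\; k\,\|k^{-1}\Omega\|_{\Phi(L)} \;>\; k.
\]
Letting $k \to \vertiii{\Omega}_{\Phi(L)}^-$ gives $\|\Omega\|_{\Phi(L)} \geq \vertiii{\Omega}_{\Phi(L)}$, contradicting the standing hypothesis $\|\Omega\|_{\Phi(L)} < \vertiii{\Omega}_{\Phi(L)}$. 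Hence $\vertiii{\Omega}_{\Phi(L)} \leq 1$, as claimed.

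I do not anticipate any significant obstacle: once the convexity scaling $\Phi(t/k) \leq \Phi(t)/k$ for $k\geq 1$ is in hand, the argument is a one-line comparison together with a limit $k\to\vertiii{\Omega}_{\Phi(L)}^-$. The only subtlety worth flagging is to restrict to $k\geq 1$ when invoking the scaling inequality, which is exactly why the contradiction requires the assumption $\vertiii{\Omega}_{\Phi(L)} > 1$ rather than any weaker positivity.
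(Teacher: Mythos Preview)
The paper does not give its own proof of this lemma; it simply cites Lemma~8.8 of Bennett--Sharpley. Your argument is correct and is precisely the standard convexity proof one finds there: from $\Phi(0)=0$ and convexity one gets $\Phi(t/k)\le \Phi(t)/k$ for $k\ge 1$, and then the contrapositive follows by integrating and letting $k\uparrow \vertiii{\Omega}_{\Phi(L)}$.

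One small remark worth recording: the paper's displayed definition of the Luxemburg norm reads $\inf\{k>0:\|k^{-1}\Omega\|_{\Phi(L)}<\infty\}$, which is evidently a misprint (that quantity would not define a norm). You correctly work with the standard definition $\inf\{k>0:\|k^{-1}\Omega\|_{\Phi(L)}\le 1\}$, which is what is needed for the lemma to hold and is what Bennett--Sharpley use.
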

	\section{Main result}
	We state our main result for dimension two but the same also holds for higher dimensions using the methods in \cite{WZ,GHR}. Our main result is the following,
	\begin{theorem}\label{logoptimal}
		Let $\Phi$ be a Young's function such that
		\begin{align}
			\Psi(t)=\frac{t\log(e+t)}{\Phi(t)}&\to\infty,\;\text{as}\;t\to\infty,
		\end{align}
	Then there exists an $\Omega\in \Phi(L)(\mathbb S^1)$ such that $T_\Omega$ is $L^p$ bounded iff $p=2$. In particular, $T_\Omega$ does not map $L^1(\R^2)$ to $L^{1,\infty}(\R^2)$.
	\end{theorem}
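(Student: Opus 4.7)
My plan is to construct $\Omega$ as a lacunary superposition $\Omega=\sum_{k\ge 1}\omega_k$ of elementary odd, mean-zero bumps on $\Sp$, adapting the constructions of \cite{WZ} and \cite{GHR} and tuning the parameters against the specific Young's function $\Phi$. Each $\omega_k$ will be supported on a symmetric antipodal pair of arcs of total measure $\delta_k$ around a direction $\theta_k$, with $\|\omega_k\|_\infty=h_k$ and $\omega_k(-\theta)=-\omega_k(\theta)$. The directions $\theta_k$ will be placed at exponentially separated (lacunary) angles, and, if needed, random signs $\epsilon_k=\pm 1$ will be inserted so that the Fourier multipliers of the individual $T_{\omega_k}$ interact almost-orthogonally; this is essential because each $\|T_{\omega_k}\|_{2\to 2}$ will in fact blow up with $k$.

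For the parameter choice, the hypothesis $\Psi(t)=t\log(e+t)/\Phi(t)\to\infty$ lets me select $h_k\nearrow\infty$ with $\Psi(h_k)\ge 4^k$, and then put $\delta_k=2^{-k}/\Phi(h_k)$. Since $\Phi(h_k)\to\infty$ and $\sum_k 2^{-k}<\infty$, the supports fit disjointly in $\Sp$. Then
\[
\int_{\Sp}\Phi(|\Omega(\theta)|)\,d\theta=\sum_k\Phi(h_k)\delta_k=\sum_k 2^{-k}\le 1,
\]
so Lemma~\ref{Orlicz} gives $\Omega\in\Phi(L)(\Sp)$ with $\vertiii{\Omega}_{\Phi(L)}\le 1$, whereas
\[
\int_{\Sp}|\Omega(\theta)|\log(e+|\Omega(\theta)|)\,d\theta=\sum_k h_k\log(e+h_k)\delta_k=\sum_k 2^{-k}\Psi(h_k)\ge\sum_k 2^k=\infty,
\]
showing $\Omega\notin L\log L(\Sp)$; by construction $\Omega$ is odd and satisfies $\int_{\Sp}\Omega=0$.

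For the $L^2$ bound I would work with the Fourier multiplier $m(\xi)=\int_{\Sp}\Omega(\theta)\bigl[\log(1/|\xi\cdot\theta|)-i\tfrac{\pi}{2}\operatorname{sgn}(\xi\cdot\theta)\bigr]\,d\theta$, split $m=\sum_k m_k$, and apply Cotlar--Stein-type almost-orthogonality in the frequency variable (using the lacunarity of the $\theta_k$'s and, if needed, Khintchine after sign-randomization) to obtain $\|m\|_\infty\le C$, as in \cite{GHR}. To witness failure of $L^p$-boundedness for $p\neq 2$ and of weak type $(1,1)$, I would test $T_\Omega$ on suitably placed characteristic functions adapted to each scale: each $\omega_k$ produces a super-level set in the cone around $\theta_k$ whose measure, combined with the essentially disjoint contributions from different $k$, yields a weak-type lower bound governed by $\sum_k h_k\log(e+h_k)\delta_k=\infty$. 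Since $\Omega$ is odd, $T_\Omega^*=-T_\Omega$, so the $L^p$ failure at $p\in(1,2)$ transports to all $p>2$.

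The main obstacle I anticipate is reconciling two opposing demands: tight cancellation among the $m_k$'s is required to keep $m$ bounded (for $L^2$), yet enough asymmetry must survive so that no cancellation is available on the test functions used to certify failure at $p\ne 2$ and at weak $(1,1)$. Randomizing the signs $\epsilon_k$ and invoking Khintchine's inequality (a device appearing already in \cite{GHR}) is the standard mechanism for negotiating this trade-off, while the novel bookkeeping is the matching between $(h_k,\delta_k)$ and $\Phi$ carried out above.
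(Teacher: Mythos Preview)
Your parameter bookkeeping against $\Phi$ is correct and does place $\Omega$ in $\Phi(L)\setminus L\log L$. The fatal gap is the choice of \emph{odd} bumps. For any odd $\Omega\in L^1(\Sp)$ the method of rotations writes
\[
T_\Omega f(x)=\tfrac12\int_{\Sp}\Omega(\theta)\,H_\theta f(x)\,d\theta,
\]
with $H_\theta$ the directional Hilbert transform; since $\|H_\theta\|_{L^p\to L^p}=C_p$ uniformly in $\theta$, one obtains $\|T_\Omega\|_{L^p\to L^p}\le \tfrac12 C_p\|\Omega\|_{L^1(\Sp)}$ for \emph{every} $1<p<\infty$. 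Your $\Omega$ is in $L^1(\Sp)$ (indeed $\sum_k h_k\delta_k=\sum_k 2^{-k}h_k/\Phi(h_k)<\infty$ because $\Phi(t)/t\to\infty$), so your $T_\Omega$ is automatically bounded on all $L^p$, and the conclusion ``$L^p$-bounded iff $p=2$'' is unattainable along this route. This also explains why the $L^2$ bound would be trivial for odd $\Omega$ --- no Cotlar--Stein or randomization is needed --- and why the constructions in \cite{WZ,GHR} use \emph{even} pieces.

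The paper's proof accordingly differs in structure. It takes even building blocks $w_k$, for which the multiplier is the logarithmic integral $\int_{\Sp}\Omega(\theta)\log|\langle\xi',\theta\rangle|^{-1}\,d\theta$, and it does \emph{not} construct a single explicit $\Omega$. Instead it produces a sequence $\Omega_n$ with $\vertiii{\Omega_n}_{\Phi(L)}\lesssim 1$ and $\|m(\Omega_n)\|_\infty\lesssim\log n$, but $\|T_{\Omega_n}\|_{L^p\to L^p}\gtrsim n^{|1/2-1/p|}$. The lower bound is obtained not by testing on characteristic functions but via de~Leeuw-type transference to multipliers on $\T$ combined with Rudin's $\pm1$ sequences whose $M^p(\T)$-norms grow like $n^{|1/2-1/p|}$ (this is the role of \Cref{Riesz} and \Cref{Transference}); the single $\Omega$ in the theorem is then extracted by the uniform boundedness principle on the Banach space $\{\Omega:\vertiii{\Omega}_{\Phi(L)}+\|m(\Omega)\|_\infty<\infty\}$. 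If you wish to repair your approach you must switch to even bumps, after which the $L^2$ bound becomes the genuine difficulty (the individual $\|m_k\|_\infty$ diverge), and the $L^p$-unboundedness needs a mechanism like the transference/Riesz-product argument rather than an ad~hoc test-function computation.
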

	We note that using the geometric construction in \cite{H2}, one can obtain the above theorem for the space $L(\log L)^{1-\epsilon}(\Sp),\;0<\epsilon\leq 1$. To obtain the general case, we will employ the construction in \cite{GHR} with a suitable modification to ensure that the resulting $\Omega$ lies in the required Orlicz space.
	\section{Proof of \Cref{logoptimal}}
	To prove \Cref{logoptimal}, we will rely on a transference principle \cite{GHR}. The space of $L^p$ multipliers $M^p(\T)$ is defined as
	\[M^p(\T)=\{\textbf{a}=\{a_n\}\in l^\infty(\Z):\; T_{\textbf{a}}f(x)=\sum_{n\in\Z}a_n\widehat f(n)e^{2\pi inx} \text{ is bounded on } L^p(\T)\},\]
	with $\norm{\textbf{a}}_{M^p(\T)}=\norm{T_{\textbf{a}}}_{L^p(\T)\to L^p(\T)}$. 
	
	The idea is to construct a sequence of $\{\Omega_n\}$ such that the $L^p$ norm of $T_{\Omega_n}$ is large for $p\neq 2$. We achieve this by employing the fact that $\{e^{2\pi ikx},\;k\in\Z\}$ is not an unconditional basis for $L^p([0,1]),\;p\neq 2$. We have,
	\begin{lemma}[\cite{GHR}]\label{Riesz}
		For $p\neq 2$ and fixed $n\in\N$, there exists finite sequences $\{a_k\}_{k=1}^{n}$ and $\{\epsilon_k\}_{k=1}^{n}$ (depending on $n$) with $\epsilon_k\in\{-1,1\}$ such that
		\[\norm[\Big]{\sum\limits_{k=1}^n\epsilon_k a_k e^{2\pi ikx}}_{L^p(\mathbb [0,1])}\geq c_p n^{\abs{\frac{1}{2}-\frac{1}{p}}}\norm[\Big]{\sum\limits_{k=1}^n a_k e^{2\pi ikx}}_{L^p(\mathbb [0,1])},\]
		where $c_p>0$ depends only on $p$. Consequently, $\norm{\{\dots,0,\epsilon_1,\epsilon_2,\dots,\epsilon_n,0,\dots\}}_{M^p(\T)}\gtrsim n^{\abs{\frac{1}{2}-\frac{1}{p}}}$. Moreever, we can choose $\epsilon_k$ such that
		\[\norm{\{\dots,0,\epsilon_1,\epsilon_2,\dots,\epsilon_n,0,\dots\}}_{M^p(\T)}=\sup\{\norm{\{\dots,0,\delta_1,\delta_2,\dots,\delta_n,0,\dots\}}_{M^p(\T)}:\;|\delta_k|\leq 1\}.\]
	\end{lemma}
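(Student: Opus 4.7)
The plan is to derive the lemma from Khintchine's inequality together with the classical asymptotics of the Dirichlet kernel $D_n(x)=\sum_{k=1}^n e^{2\pi i k x}$, namely $\norm{D_n}_{L^p([0,1])}\asymp n^{1-1/p}$ for $1<p<\infty$. Applying Khintchine pointwise with $c_k=a_k e^{2\pi i k x}$ and integrating in $x$ yields
\[
\mathbb{E}_\epsilon \norm[\Big]{\sum_{k=1}^n \epsilon_k a_k e^{2\pi i k x}}_{L^p([0,1])}^p \asymp_p \Bigl(\sum_{k=1}^n \abs{a_k}^2\Bigr)^{p/2},
\]
so that one can extract $\{\epsilon_k\}\in\{\pm1\}^n$ making the left-hand norm as large as $\gtrsim(\sum\abs{a_k}^2)^{1/2}$ and, alternatively, other choices making it as small as $\lesssim(\sum\abs{a_k}^2)^{1/2}$.

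The argument then splits at $p=2$. For $1<p<2$, I would take $a_k=1$, so that $\sum a_k e^{2\pi ikx}=D_n$ has $L^p$ norm $\asymp n^{1-1/p}$, while Khintchine produces $\{\epsilon_k\}$ with $\norm{\sum \epsilon_k e^{2\pi ikx}}_{L^p}\gtrsim n^{1/2}$; dividing gives the ratio $n^{1/p-1/2}=n^{\abs{1/2-1/p}}$. For $p>2$, the roles reverse: I would first choose $\{\sigma_k\}\in\{\pm1\}^n$ with $\norm{\sum \sigma_k e^{2\pi ikx}}_{L^p}\lesssim n^{1/2}$ (available from the averaged estimate in the opposite direction), and then set $a_k=\sigma_k$ and $\epsilon_k=\sigma_k$, so that $\sum\epsilon_k a_k e^{2\pi ikx}=D_n$ has $L^p$ norm $\asymp n^{1-1/p}$, yielding the ratio $n^{1-1/p}/n^{1/2}=n^{1/2-1/p}=n^{\abs{1/2-1/p}}$. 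The multiplier-norm bound is then immediate from the definition of $\norm{\cdot}_{M^p(\T)}$ by plugging in the test function $f=\sum a_k e^{2\pi ikx}$.

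For the ``moreover'' claim I would use a convexity argument. The map $(\delta_k)_{k=1}^n\mapsto \norm{\{\ldots,0,\delta_1,\ldots,\delta_n,0,\ldots\}}_{M^p(\T)}$ can be written as the supremum of the linear functionals $(\delta_k)\mapsto\abs{\sum_k \delta_k\,\widehat f(k)\,\overline{\widehat g(k)}}$ over $\norm{f}_{L^p},\norm{g}_{L^{p'}}\leq 1$, hence is convex in $(\delta_k)$; a convex function on the cube $[-1,1]^n$ attains its maximum at a vertex, which produces $(\epsilon_k)\in\{-1,1\}^n$ with the desired extremal property. The main obstacle is the $p>2$ case: the choice $a_k=1$ fails because the Dirichlet kernel already has $L^p$ norm $\asymp n^{1-1/p}>n^{1/2}$, so one must use the averaged estimate in the opposite direction to manufacture a signed exponential sum that is \emph{small} in $L^p$, and then apply a sign flip to restore the Dirichlet kernel on the signed side.
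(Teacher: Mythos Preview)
Your argument is correct and follows the same skeleton as the paper's: pair the Dirichlet-kernel asymptotics $\norm{D_n}_{L^p}\asymp n^{1-1/p}$ with a $\pm 1$-signed exponential sum of $L^p$ size about $n^{1/2}$, then read off the ratio. The only real difference is the source of the small signed sum. The paper quotes Rudin's theorem (a Rudin--Shapiro type construction yielding $\norm{\sum_{k=1}^n\epsilon_k e^{2\pi ikx}}_{L^\infty}\le 5n^{1/2}$), handles $p>2$ with $a_k=\epsilon_k$ so that $\sum\epsilon_k a_k e^{2\pi ikx}=D_n$, and then appeals to duality for $p<2$; you instead obtain both the ``small'' and ``large'' signed sums probabilistically via Khintchine and treat the two ranges of $p$ by separate direct constructions. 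Either route works; Rudin's result gives a uniform $L^\infty$ bound (hence all $p$ at once) while Khintchine is more elementary but only controls a fixed finite $p$. Your convexity justification of the ``moreover'' clause---writing the $M^p$ norm as a supremum of moduli of linear functionals in $(\delta_k)$ and invoking that a convex function on $[-1,1]^n$ attains its maximum at a vertex---is exactly the right argument, and the paper does not spell it out.
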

	\begin{proof}
		The inquality follows from the unconditionality of basis $\{e^{2\pi ikx},\;k\in\Z\}$ for $p\neq 2$ with constant $K(n)\to\infty$ as $n\to\infty$. We justify the choice of constant $c_pn^{\abs{\frac{1}{2}-\frac{1}{p}}}$. Indeed, we invoke Theorem 1 from \cite{R}. 
		
		\emph{For $n\in\N$, there exists $\{\epsilon_k\}_{k=1}^n$ with $\epsilon_k=\pm1$ such that $\norm{\sum_{k=1}^n\epsilon_k e^{2\pi ikx}}_{L^\infty([0,1])}\leq 5n^{\frac{1}{2}}.$}\\
		
		By using the well known fact that $L^p$ norm of the Dirichlet kernel satisfies the following estimate:
		\[\norm{\sum_{k=1}^n\ e^{2\pi ikx}}_{L^p([0,1])}\sim n^{1-\frac{1}{p}}.\]
		we have, for $p>2$ and $a_k=\epsilon_k$,
		\[\norm[\Big]{\sum\limits_{k=1}^n\epsilon_k^2 e^{2\pi ikx}}_{L^p(\mathbb [0,1])}\geq c_p n^{\frac{1}{2}-\frac{1}{p}}\norm[\Big]{\sum\limits_{k=1}^n \epsilon_k e^{2\pi ikx}}_{L^p(\mathbb [0,1])}.\]
		The analogous inequality for $p<2$ follows from duality.
	\end{proof}
	If we have a seqeunce of multiplier $\{\gamma_n\}$ on $\R^2$ such that $\gamma_n\vert_{\Z}=\{\dots,0,\epsilon_1,\epsilon_2,\dots,\epsilon_n,0,\dots\}$ where $\epsilon_k$ are from the above lemma, then by classical transference $\norm{T_{\gamma_n}}_{L^p(\R^2)\to L^p(\R^2)}\gtrsim n^{\abs{\frac{1}{2}-\frac{1}{p}}}$. Our aim is to produce $\gamma_n$ such that $T_{\gamma_n}=T_{\Omega_n}$. Towards this, the following observation is important.
	\begin{lemma}[\cite{GHR}]\label{Transference}
		Let $1<p<\infty$ and $\gamma\in M^p(\R^2)$ be continuous on an arithmetic progression $\{x_k\}_{k=1}^n$ in $\R^2$ (i.e. there exists vector $v\in\R^2$ such that $x_k-x_{k-1}=v$). Then there exists a constant $C_p>0$ such that
		\[\norm{\gamma}_{M^p(\R^2)}\geq C_p \norm{\{\dots,0,\gamma(x_1),\gamma(x_2),\dots,\gamma(x_n),0,\dots\}}_{M^p(\T)}.\]
	\end{lemma}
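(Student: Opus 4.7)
The plan is a standard two-step de Leeuw-type transference: first push the $\R^2$ multiplier down to $\T^2$, then slice it to a coordinate circle $\T$, and finally truncate to the index set $\{1,\ldots,n\}$.

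\textbf{Step 1 (Normalization).} Multiplier norms on $M^p(\R^2)$ are invariant under invertible linear substitutions of the frequency variable (via the compensating change of variables on the function side) and under modulation $\gamma(\xi)\mapsto e^{2\pi i x_0\cdot \xi}\gamma(\xi)$ (which corresponds to translating the input function). Using these two symmetries I may assume $v=e_1$ and $x_1=(1,0)$, so that $x_k=(k,0)$ for $k=1,\ldots,n$.

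\textbf{Step 2 (Restriction to $\T$).} To avoid the fact that $\gamma$ is only continuous on the progression, I mollify: set $\gamma_\varepsilon=\gamma\ast\eta_\varepsilon$ for a smooth approximate identity $\eta_\varepsilon$. Then $\gamma_\varepsilon\in C(\R^2)$ and $\|\gamma_\varepsilon\|_{M^p(\R^2)}\le \|\gamma\|_{M^p(\R^2)}$. By de Leeuw's restriction theorem, $\gamma_\varepsilon|_{\Z^2}$ is an $M^p(\T^2)$ multiplier with the same bound. To slice down to $\T$, take $f\in L^p(\T)$ and let $F(x,y)=f(x)\in L^p(\T^2)$; since $\widehat F(j,k)=\widehat f(j)\delta_{k,0}$, the action of the $\T^2$ multiplier $\{\gamma_\varepsilon(j,k)\}$ on $F$ reduces to the action of the sequence $\{\gamma_\varepsilon(j,0)\}_{j\in\Z}$ on $f$, and Fubini gives
\[
\bigl\|\{\gamma_\varepsilon(j,0)\}_{j\in\Z}\bigr\|_{M^p(\T)}\le \|\gamma\|_{M^p(\R^2)}.
\]

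\textbf{Step 3 (Truncation and passage to the limit).} The partial-sum operator $S_n$ on $\T$ satisfies $\|S_n\|_{L^p(\T)\to L^p(\T)}\le C_p$ uniformly in $n$ by M.~Riesz's theorem on the boundedness of the conjugate function for $1<p<\infty$. Multiplication by $\chi_{\{1,\ldots,n\}}$ on the multiplier side corresponds to composition with $S_n$ (up to a harmless modulation), so the truncated multiplier $\{\ldots,0,\gamma_\varepsilon(x_1),\ldots,\gamma_\varepsilon(x_n),0,\ldots\}$ has $M^p(\T)$ norm bounded by $C_p\|\gamma\|_{M^p(\R^2)}$. Finally, by continuity of $\gamma$ at each $x_k$, $\gamma_\varepsilon(x_k)\to \gamma(x_k)$ as $\varepsilon\to 0$; since only $n$ coefficients are nonzero, the corresponding convolution operators on $L^p(\T)$ live in a finite-dimensional subspace of bounded operators and the norm bound persists in the limit. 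Inverting the resulting inequality yields the stated lemma with constant $C_p^{-1}$.

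The only nontrivial bookkeeping is that $\gamma$ is assumed continuous only on the arithmetic progression rather than on all of $\R^2$; this is precisely what the mollification step handles, since after truncation only the $n$ values $\gamma_\varepsilon(x_k)\to \gamma(x_k)$ are relevant. No other step presents a serious obstacle.
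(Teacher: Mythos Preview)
The paper does not actually prove this lemma; it simply cites \cite{GHR}. Your argument is the standard de Leeuw restriction argument and is essentially correct, so there is nothing to compare against.

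One slip worth fixing in Step~1: the symmetry you invoke to relocate the base point $x_1$ is the wrong one. Multiplying the multiplier by a character, $\gamma(\xi)\mapsto e^{2\pi i x_0\cdot\xi}\gamma(\xi)$, corresponds to translating the \emph{output} (or, equivalently up to conjugation, the input) of $T_\gamma$; it does preserve the $M^p$ norm, but it does not move the points at which $\gamma$ is evaluated, so it cannot be used to force $x_1=(1,0)$. What you want instead is translation of the frequency variable, $\gamma(\xi)\mapsto\gamma(\xi+a)$, which preserves $\|\cdot\|_{M^p}$ because $T_{\gamma(\cdot+a)}f=e^{-2\pi i a\cdot x}\,T_\gamma\bigl(e^{2\pi i a\cdot x}f\bigr)$, i.e.\ conjugation by a modulation. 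Combined with the invertible linear change of variable sending $v\mapsto e_1$, this genuinely reduces to $x_k=(k,0)$, and the rest of your proof (mollification, de Leeuw $\R^2\to\T^2$, slicing $\T^2\to\T$, truncation via M.~Riesz, and the finite-dimensional limit $\varepsilon\to0$) goes through as written.
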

	The proof of above lemma can be found in \cite{GHR}. We now begin the proof of \Cref{logoptimal}.
	
	\begin{proof}[\textbf{Proof of \Cref{logoptimal}}]
		We fix a large $N\in\N$. Let $n=\left[\Psi\left(\frac{N}{\log N}\right)\right]$. By hypothesis, we have $n\to\infty$ as $N\to\infty$.
		
		Let $s_n\in\N$ be a large number such that there exists natural numbers $t_1,t_2,\dots,t_{2n}$ satisfying
		\begin{itemize}
			\item The numbers $t_k$ are in arithmetic progression, i.e. $t_{k+1}-t_{k}=t_{k}-t_{k-1}$.
			\item Let $x_k=(t_k,s_n)\in\R^2$. Then $x_k,\; k=1,\dots,2n$ lies in the second quadrant between the lines $y$-axis and $y=-x$.
			\item $\abs{\frac{x_{k+1}}{|x_{k+1}|}-\frac{x_k}{|x_k|}}\sim \frac{1}{n}$.
		\end{itemize}
		
		We denote $\tilde{x}_k$ to be the point on $\mathbb S^1$ obtained by rotating the point $\frac{x_k}{|x_k|}$ by $\frac{\pi}{2}$ radians clockwise. We consider $I_k,\;k=1,\dots,2n$ to be the arc on $\mathbb S^1$ with centre $\tilde x_k$ and arc length $N^{-1}$ and denote $\mathfrak R_{\alpha}(I_k)$ to be the arc obtained by rotating $I_k$ by $\alpha$ radians counterclockwise. 
		
		We define $w_k$ as
		\[w_k(\theta)=c_{I_k}(-\chi_{_{I_k}}(\theta)+\chi_{_{\mathfrak R_{\frac{\pi}{2}}(I_k)}}(\theta)-\chi_{_{\mathfrak R_{\pi}(I_k)}}(\theta)+\chi_{_{\mathfrak R_{\frac{3\pi}{2}}(I_k)}}(\theta)),\]
		where we choose $c_{I_k}$ as follows. We recall that the Fourier transform of the kernel in $T_\Omega$ for any even $\Omega$ with mean value zero is given by
		\[\widehat K_\Omega(\xi)=\int_{\Sp}\Omega(\theta)\log\frac{1}{|\langle\xi,\theta\rangle|}\;d\theta.\]
		We define the larger quantity $m(\Omega)$ which will be useful for our purpose.
		\[m(\Omega)(\xi):=\int_{\Sp}|\Omega(\theta)|\log\frac{1}{|\langle\xi,\theta\rangle|}\;d\theta.\]
		Clearly, $|\widehat{K}_\Omega(\xi)|\leq m(\Omega)(\xi)$. 
		
		We choose $c_{I_k}$ such that $m(w_k)(\frac{x_k}{|x_k|})=1$. It is not difficult to see that $c_{I_k}$ and $\widehat K_{w_k}(\frac{x_k}{|x_k|})$ are independent of $k$ and satisfies,
		\begin{align*}
			c_{I_k}&\sim \frac{N}{\log N}.\\
			1\lesssim\sup_x|\widehat K_{w_k}(x)|=\Bigl|\widehat K_{w_k}&\Bigl(\frac{x_k}{|x_k|}\Bigr)\Bigr|\leq\sup_x m(w_k)(x)=1.
		\end{align*}
		We now set 
		\begin{equation*}
			\Omega_n=\sum_{k=1}^{2n}(-1)^k\epsilon_{\left[\frac{k+1}{2}\right]}w_k,
		\end{equation*}
		where $[\;]$ denotes the integer part and $\epsilon_{[.]}$ is as in \Cref{Riesz}.
		
		By the disjointness of the arcs $I_k$, we have 
		\begin{align*}
			\norm{\Omega_n}_{\Phi(L)(\mathbb S^1)}&\sim \sum_{k=1}^{2n}\int_{I_k}\Phi(c_{I_k})\\
			&\sim n N^{-1}\Phi\left(\frac{N}{\log N}\right)\\
			&\sim 1.
		\end{align*}
		Moreover, since $\norm{\Omega_n}_{\Phi(L)(\mathbb S^1)}>1$, by \Cref{Orlicz}, we have $\vertiii{\Omega_n}_{\Phi(L)(\mathbb S^1)}\lesssim 1$.
		
		To calculate the $L^2$ norm of $T_{\Omega_n}$, we will need the following estimate. Let $J_k$ be the arc of length $\frac{1}{100n}$ and whose bisector passes though the point $\frac{x_k}{|x_k|}$. Then for $x\in\mathbb S^1$ lying in second quadrant between the lines $y$-axis and $y=-x$ with $x\notin\bigcup_{i=0}^3 \mathfrak{R}_{\frac{i\pi}{2}}(J_k)$, we have
		\begin{equation}
			m(w_k)(x)\lesssim \frac{\log n}{\log N}.\label{formula7}
		\end{equation}
		The equation \eqref{formula7} follows from the fact that for $\gamma\in (2I_k)^c\cap(0,\frac{\pi}{4})$, we have
		\begin{equation*}
			|m(w_{I_k})(e^{i\gamma})|\lesssim\frac{|\log|\gamma-\tilde x_k||}{|\log |I_k||}.
		\end{equation*}
		Indeed, for $\theta\in I_k$, we have $|\gamma-\tilde x_{k}|<|\theta-\tilde x_{k}|+|\theta-\gamma|<\frac{|I_k|}{2}+|\theta-\gamma|<|\gamma-\tilde x_{k}|/2+|\theta-\gamma|$. Thus $\frac{|\tilde x_{k}-\gamma|}{2}<|\theta-\gamma|$ and it follows that
		\begin{align*}
			|m(w_{I_k})(e^{i\gamma})|&\lesssim-c_{I_k}\int_{I_k}\log|\sin(\theta-\gamma)|\;d\theta\\
			&\leq c_{I_k}|I_k||\log\left|\sin\left(\frac{|\gamma-\tilde x_k|}{2}\right)\right|\\
			&\lesssim \frac{|\log|\gamma-\tilde x_k||}{|\log |I_k||}.
		\end{align*}
		Since $\frac{\Phi(t)}{t}\to\infty$ as $t\to\infty$, we have $\frac{\Psi(t)}{\log t}\to 0$ as $t\to\infty$. Therefore,
		\begin{equation}\label{formula9}
			\norm{m(\Omega_n)}_{L^\infty(\mathbb S^1)}\lesssim 1+\frac{n\log n}{\log N}\lesssim \frac{\Psi\left(\frac{N}{\log N}\right)}{\log N}\log \Psi\left(\frac{N}{\log N}\right)\lesssim \log n.
		\end{equation}
		
		Now we turn to the estimate of $L^p$ bounds of $T_{\Omega_n}$, We claim
		\begin{equation}\label{formula10}
			\norm{T_{\Omega_n}}_{L^p(\R^2)\to L^p(\R^2)}\gtrsim n^{\abs{\frac{1}{2}-\frac{1}{p}}}.
		\end{equation}
		To acheive the above claim, we need to prove that for $1\leq k\leq n$ and $x\in\mathbb S^1$ lying in second quadrant between the lines $y$-axis and $y=-x$ with $x\notin \bigl(\bigcup_{i=0}^3 \mathfrak{R}_{\frac{i\pi}{2}}(J_{2k})\bigr)\cup\bigl(\bigcup_{i=0}^3 \mathfrak{R}_{\frac{i\pi}{2}}(J_{2k-1})\bigr)$, we have
		\begin{equation}\label{formula8}
			|\widehat K_{w_{2k}}(x)-\widehat K_{w_{2k-1}}(x)|\lesssim \left(n\log N\left|\frac{x}{|x|}-\frac{x_{2k}}{|x_{2k}|}\right|\right)^{-1}.
		\end{equation}
		Indeed, let $e^{i\theta_{2k}}=\frac{x_{2k}}{|x_{2k}|},\; e^{i\gamma}=\frac{x}{|x|}$ and $A_{2k}$ be the interval in $(-\frac{\pi}{4},\frac{\pi}{4})$ such that $I_{2k}-\frac{x_{2k}}{|x_{2k}|}=\{e^{i\theta}:\; \theta\in A_{2k}\}$. By using mean value theorem twice and the fact that $|\theta_{2k}-\theta_{2k-1}|$ is small, we have
		\begin{align*}
			|\widehat K_{w_{2k}}(x)-\widehat K_{w_{2k-1}}(x)|&\lesssim c_{I_{2k}}\int_{A_{2k}}\left(\log \frac{1}{|\tan (\theta+\theta_{2k}-\gamma)|}-\log\frac{1}{|\tan (\theta+\theta_{2k-1}-\gamma)|}\right)\;d\theta\\
			&\lesssim c_{I_{2k}}\int_{A_{2k}}\frac{|\tan(\theta+\theta_{2k}-\gamma)-\tan(\theta+\theta_{2k-1}-\gamma)|}{|\tan(\theta+\theta_{2k}-\gamma)|}\;d\theta\\
			&\lesssim c_{I_{2k}}\int_{A_{2k}} \frac{|\theta_{2k}-\theta_{2k-1}|}{|\theta+\theta_{2k}-\gamma|}\;d\theta\\
			&\lesssim \frac{c_{I_{2k}}}{n}\int_{A_{2k}}\frac{1}{|\gamma-\theta_{2k}|}\;d\theta\\
			&\lesssim\left(n\log N\left|\frac{x}{|x|}-\frac{x_{2k}}{|x_{2k}|}\right|\right)^{-1},
		\end{align*}
		where we have used $|\gamma-\theta_{2k}|\leq 2|\theta+\theta_{2k}-\gamma|$ and $\tan \theta\sim\theta$ away from odd multiples of $\frac{\pi}{2}$.\\
		Now, we return to the proof of the claim \eqref{formula10}. For $1\leq k\leq n$, we have
		\[\widehat K_{\Omega_n}(x_{2k})=(-1)^{2k}\widehat K_{w_{2k}}(x_{2k})\epsilon_k+\sum\limits_{1\leq i\neq 2k\leq 2n}(-1)^i\epsilon_{\left[\frac{i+1}{2}\right]}\widehat K_{w_i}(x_{2k})=D\epsilon_k+\delta_k,\]
		where $D=\widehat K_{w_{2k}}(x_{2k})$ and $\delta_k=\sum\limits_{1\leq i\neq 2k\leq 2n}(-1)^i\epsilon_{\left[\frac{i+1}{2}\right]}\widehat K_{w_i}(x_{2k})$.
		
		Using \eqref{formula7} for the term $i=2k-1$ and \eqref{formula8} for the remaining terms (in pair), we get 
		\[|\delta_k|\leq C\left(\frac{\log n}{\log N}+\frac{1}{\log N}\sum_{i=1}^{2n}\frac{1}{i}\right)\leq \frac{C'\log n}{\log N}\leq \frac{|D|}{4} \text{ (for large }n).\] Hence by choice of \Cref{Riesz}, we have
		\[\frac{1}{2}\norm{\{\dots,0,\epsilon_1,\epsilon_2,\dots,\epsilon_{n},0,\dots\}}_{M^p(\T)}\geq\norm[\Big]{\Bigl\{\dots,0,\frac{\delta_1}{D},\frac{\delta_2}{D},\dots,\frac{\delta_{n}}{D},0,\dots\Bigr\}}_{M^p(\T)}.\]
		Since $\widehat K_{\Omega_n}(\theta)$ is a circular convolution of a $L^1(\mathbb S^1)$ and $L^\infty(\mathbb S^1)$, it is continuous at the points $x_{2k},k=1,\dots,n$ and applying \Cref{Transference}, we have
		\begin{align*}
			&\norm{T_{\Omega_n}}_{L^p(\R^2)\to L^p(\R^2)}\\
			=&\norm{\widehat K_{\Omega_n}}_{M^p(\R^2)}\\
			\gtrsim& \norm{\{\dots,0,\widehat K_{\Omega_n}(x_2),\widehat K_{\Omega_n}(x_4),\dots,\widehat K_{\Omega_n}(x_{2n}),0,\dots\}}_{M^p(\T)}\\
			\gtrsim&|D|\left(\norm{\{\dots,0,\epsilon_1,\epsilon_2,\dots,\epsilon_{n},0,\dots\}}_{M^p(\T)}-\norm[\Big]{\Bigl\{\dots,0,\frac{\delta_1}{D},\frac{\delta_2}{D},\dots,\frac{\delta_{n}}{D},0,\dots\Bigr\}}_{M^p(\T)}\right)\\
			\geq& \frac{|D|}{2} \norm{\{\dots,0,\epsilon_1,\epsilon_2,\dots,\epsilon_{n},0,\dots\}}_{M^p(\T)}\\
			\gtrsim& n^{\abs{\frac{1}{2}-\frac{1}{p}}},
		\end{align*}
		where we used \Cref{Riesz} in the last step.
		We conclude the proof by an application of uniform boundedness principle. Indeed, We define the space,
		\[\mathfrak{B}:=\{\Omega:\mathbb S^1\to\R \text{ is even}:\int\Omega=0 \text{ and } \|\Omega\|_{\mathfrak{B'}}=\vertiii{\Omega}_{\Phi(L)(\mathbb S^1)}+\|m(\Omega)\|_{L^\infty(\mathbb S^1)}<\infty\}.\]
		The space $\mathfrak{B}$ forms a Banach space.
		
		Fix $p\neq 2$. For $\mathfrak{F}=\{f\in L^p(\R^2): \|f\|_p=1\}$, we define a collection of operators $\varTheta_f:\mathfrak{B}\to L^p$ as $\varTheta_f(\Omega)=T_\Omega(f)$. Suppose we have
		\[\|T_\Omega\|_{L^p(\R^2)\to L^p(\R^2)}=\sup\limits_{f\in\mathfrak{F'}}\|T_\Omega f\|_{L^p(\R^2)}<\infty,\;\forall\Omega\in\mathfrak{B}.\]
		Then by uniform boundedness principle, there exists $M>0$ such that
		\[\|T_\Omega\|_{L^p(\R^2)\to L^p(\R^2)}=\sup\limits_{f\in\mathfrak{F}}\|\varTheta_f(\Omega)\|_{L^p(\R^2)}<M\|\Omega\|_\mathfrak{B}.\]
		which along with \eqref{formula9} and \eqref{formula10} implies that
		\begin{align*}
			n^{\abs{\frac{1}{2}-\frac{1}{p}}}&\lesssim\norm{T_{\Omega_n}}_{L^p(\R^2)\to L^p(\R^2)}\\
			&\lesssim\vertiii{\Omega_n}_{\Phi(L)(\mathbb S^1)}+\|m(\Omega_n)\|_{L^\infty(\mathbb S^1)}\\
			&\lesssim \log n.
		\end{align*}
		which is a contradiction for large $n$ and $p\neq 2$ and that concludes the proof of \Cref{logoptimal}.
	\end{proof}
	\section*{Acknowledgement}
	I would like to thank Prof. Parasar Mohanty and Prof. Adimurthi for various useful discussions regarding the problem. I acknowledge the financial support from Science and Engineering Research Board, Department of Science and Technology, Govt. of India, under the scheme Core Research Grant, file no. CRG/2021/000230.
		
\end{document}